\numberwithin{equation}{section}
\newtheorem{Theorem}{Theorem}[section]
\newtheorem{Lemma}[Theorem]{Lemma}
\newcommand{\cP}{\mathcal{P}}
\newcommand{\R}{\mathbb{R}}
\newcommand{\cS}{\mathcal{S}}
\renewcommand{\S}{\mathbb{S}}
\newcommand{\del}{\partial}
\renewcommand{\phi}{\varphi}
\newcommand{\grad}{\nabla}
\newcommand{\Hess}{\operatorname{Hess}}
\newcommand{\Ric}{\operatorname{Ric}}
\newcommand{\tr}{\operatorname{tr}}
\DeclareMathOperator*{\esup}{ess \, sup}
\DeclareMathOperator*{\einf}{ess \, inf}
\renewcommand{\[}{\begin{equation}}
\renewcommand{\]}{\end{equation}}
\title{Pucci eigenvalues on geodesic balls}
\author{Sinan Ariturk}
\address{Pontif\'icia Universidade Cat\'olica do Rio de Janeiro}
\email{ariturk@mat.puc-rio.br}
\begin{document}

\begin{abstract}
We study the eigenvalue problem for the Riemannian Pucci operator on geodesic balls.
We establish upper and lower bounds for the principal Pucci eigenvalues depending on the curvature, extending Cheng's eigenvalue comparison theorem for the Laplace-Beltrami operator.
For manifolds with bounded sectional curvature, we prove Cheng's bounds hold for Pucci eigenvalues on geodesic balls of radius less than the injectivity radius.
For manifolds with Ricci curvature bounded below, we prove Cheng's upper bound holds for Pucci eigenvalues on certain small geodesic balls.
We also prove that the principal Pucci eigenvalues of an $O(n)$-invariant hypersurface immersed in $\R^{n+1}$ with one smooth boundary component are smaller than the eigenvalues of an $n$-dimensional Euclidean ball with the same boundary.
\end{abstract}

\subjclass{35P15, 35P30}

\keywords{Pucci operator, eigenvalue comparison}

\maketitle

\section{Introduction}
We establish two geometric comparison inequalities for the principal Dirichlet half-eigenvalues of the Riemannian Pucci operator.
The Pucci operator is a fully nonlinear elliptic operator with no variational structure.
Let $B$ be an open ball in $\R^n$ of finite radius and let $g$ be a Riemannian metric on $\overline B$.
Let $f: \overline B \to \R$ be a function and let $\mu_1, \mu_2, \ldots, \mu_n$ be the eigenvalues of the Riemannian Hessian of $f$, as defined in \eqref{hessdef} and \eqref{hessevdef} below.
For positive constants $a \le A$, the Riemannian Pucci operator $\cP_g^+$ is defined by
\[
	\cP_g^+ f = \bigg( \sum_{\mu_j > 0} A \mu_j \bigg) + \bigg( \sum_{\mu_j < 0} a \mu_j \bigg)
\]
We consider the following eigenvalue problem with Dirichlet boundary conditions
\[
\label{eveq}
	\begin{cases}
		\cP_g^+ \phi = -\lambda \phi & \text{ over } B \\
		\phi = 0 & \text{ over } \del B \\
	\end{cases}
\]

If $a=A=1$, then $\cP_g^+$ is the Laplace-Beltrami operator.
In this case, it is well known that the eigenvalues of \eqref{eveq} form a sequence
\[
	0 < \lambda_1 < \lambda_2 \le \lambda_3 \le \ldots
\]
The smallest eigenvalue $\lambda_1$ is known as the principal eigenvalue, and it is the unique number such that \eqref{eveq} admits a solution which does not vanish in $B$.
The principal eigenvalue is characterized variationally by
\[
\label{rayleigh}
	\lambda_1 = \inf \bigg\{ \frac{\int_B | \grad f |^2 \,dV}{\int_B |f|^2 \,dV} : f \in C_0^\infty(\overline B) \bigg\}
\]
Here $\grad$ is the Riemannian gradient and $dV$ is the Riemannian measure.

If $a<A$, then the Pucci operator $\cP_g^+$ is fully nonlinear and has no variational structure.
Applying theorems of Quaas and Sirakov~\cite{QS} and Armstrong~\cite{Arm} shows that the eigenvalue problem \eqref{eveq} admits two principal half-eigenvalues $\lambda^+$ and $\lambda^-$.
See Lemma~\ref{qs} below.
The half-eigenvalue $\lambda^+$ is the unique number such that \eqref{eveq} admits a solution which is positive over $B$.
Similarly the half-eigenvalue $\lambda^-$ is the unique number such that \eqref{eveq} admits a solution which is negative over $B$.
However there is no variational characterization similar to \eqref{rayleigh}.
Quaas and Sirakov~\cite{QS} established the existence and uniqueness of principal half-eigenvalues for a class of fully nonlinear convex elliptic operators which includes the Riemannian Pucci operators considered here.
Armstrong~\cite{Arm} extended this to non-convex operators.
Earlier results of Felmer and Quaas~\cite{FQ}, Quaas~\cite{Q}, and Busca, Esteban, and Quaas~\cite{BEQ} considered principal half-eigenvalues of the Euclidean Pucci operator.

The relationship between the Riemannian metric and the eigenvalues is complicated.
In this article we establish two comparison inequalities.
The first is an extension of Cheng's eigenvalue comparison theorem.
For manifolds with bounded sectional curvature, we prove Cheng's bounds hold for the principal Pucci eigenvalues on geodesic balls of radius less than the injectivity radius.
For manifolds with Ricci curvature bounded below, we prove Cheng's upper bound holds for the principal Pucci eigenvalues on certain small geodesic balls.
To describe this assumption precisely, let $M$ be a Riemannian manifold and let $x_0$ is a point in $M$.
Let $B(x_0,R)$ be a normal geodesic ball about $x_0$ of radius $R>0$.
We say $B(x_0,R)$ is admissible if, for every unit-speed geodesic $\gamma:[0,R] \to M$ such that $\gamma(0)=x_0$ and for every Jacobi field $J$ along $\gamma$ such that $J(0)=0$, the length $|J|$ is monotonic over $[0,R]$.
We refer to Sakai~\cite{S} for background on Riemannian geometry, including definitions of Jacobi fields~\cite[p. 36]{S}, sectional curvature~\cite[p. 43]{S}, Ricci curvature~\cite[p. 45]{S}, and injectivity radius~\cite[p. 110]{S}.

\begin{Theorem}
\label{cheng}
Let $M$ be a Riemannian manifold of dimension $n$.
Let $B_M(x_0,R)$ be a geodesic ball of radius $R>0$ about a point $x_0$ in $M$.
Fix $K$ in $\R$ and let $M_K$ be the complete simply connected space form of dimension $n$ and constant sectional curvature $K$.
Assume $R$ is less than the injectivity radius of $x_0$ in $M$ and less than the injectivity radius of $M_K$. 
Let $B_K(R)$ be a geodesic ball of radius $R$ in $M_K$.
\begin{enumerate}
\item
If the sectional curvature $K_M$ of $M$ satisfies $K_M \le K$, then
\[
\label{cheng1}
	\lambda^+ \Big( B_M(x_0,R) \Big) \ge \lambda^+ \Big( B_K(R) \Big)
\]
and
\[
\label{cheng2}
	\lambda^- \Big( B_M(x_0,R) \Big) \ge \lambda^- \Big( B_K(R) \Big)
\]
If equality holds in \eqref{cheng1} or \eqref{cheng2}, then $B_M(x_0,R)$ is isometric to $B_K(R)$.
\item
If the sectional curvature $K_M$ of $M$ satisfies $K_M \ge K$, then
\[
\label{cheng3}
	\lambda^+ \Big( B_M(x_0,R) \Big) \le \lambda^+ \Big( B_K(R) \Big)
\]
and
\[
\label{cheng4}
	\lambda^- \Big( B_M(x_0,R) \Big) \le \lambda^- \Big( B_K(R) \Big)
\]
If equality holds in \eqref{cheng3} or \eqref{cheng4}, then $B_M(x_0,R)$ is isometric to $B_K(R)$.
\item
Assume $B_M(x_0,R)$ and $B_K(R)$ are admissible.
If the Ricci curvature $\Ric_M$ of $M$ satisfies $\Ric_M \ge (n-1)K$, then
\[
\label{cheng5}
	\lambda^+ \Big( B_M(x_0,R) \Big) \le \lambda^+ \Big( B_K(R) \Big)
\]
and
\[
\label{cheng6}
	\lambda^- \Big( B_M(x_0,R) \Big) \le \lambda^- \Big( B_K(R) \Big)
\]
If equality holds in \eqref{cheng5} or \eqref{cheng6}, then $B_M(x_0,R)$ is isometric to $B_K(R)$.
\end{enumerate}
\end{Theorem}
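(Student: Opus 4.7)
The plan is to build a radial comparison function on $B_M(x_0,R)$ from the model-space principal eigenfunction and to feed it into the sub- and super-solution characterizations of the principal half-eigenvalue provided by Lemma~\ref{qs}. Let $\Phi_K\colon[0,R]\to\R$ be the radial profile of the positive principal $\cP^+$-eigenfunction on $B_K(R)$, so that $\cP_K^+\Phi_K=-\lambda^+(B_K(R))\,\Phi_K$. An analysis of the governing radial ODE, which switches between coefficients $A$ and $a$ according to the sign of $\Phi_K''$, shows that $\Phi_K>0$ on $[0,R)$ with $\Phi_K(R)=0$, $\Phi_K'(r)<0$ for $r\in(0,R]$, $\Phi_K''(0)<0$, and $\Phi_K''$ changes sign at most once on $(0,R)$. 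We then take $\tilde\phi(x):=\Phi_K(d_M(x,x_0))$ on $\overline{B_M(x_0,R)}$.

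At $x\ne x_0$, where $r:=d_M(\cdot,x_0)$ is smooth, the identity
\[
    \Hess\tilde\phi=\Phi_K''(r)\,dr\otimes dr+\Phi_K'(r)\,\Hess r
\]
shows, in an orthonormal frame diagonalizing $\Hess r$ on $(\del_r)^\perp$, that the eigenvalues of $\Hess\tilde\phi$ are the single value $\Phi_K''(r)$ and the $n-1$ values $\Phi_K'(r)\,\kappa_i(x)$, where $\kappa_i(x)$ are the principal curvatures of the geodesic sphere through $x$. In $M_K$ every $\kappa_i$ equals the common value $\kappa_K(r)=s_K'(r)/s_K(r)>0$, where $s_K$ is the Jacobi sine function of curvature $K$. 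We split $\cP^+$ applied to either $\tilde\phi$ or $\Phi_K$ according to the sign of $\Phi_K''$ and of each $\Phi_K'(r)\kappa_i(x)$, and compare term-by-term. In case (1), the Hessian comparison theorem gives $\kappa_i(x)\ge\kappa_K(r)$ and hence $\Phi_K'(r)\kappa_i(x)\le\Phi_K'(r)\kappa_K(r)<0$, so every tangential contribution in $M$ is no larger than in $M_K$ and we obtain $\cP_g^+\tilde\phi\le-\lambda^+(B_K(R))\,\tilde\phi$. In case (2), the reverse Hessian comparison gives $\Phi_K'(r)\kappa_i(x)\ge\Phi_K'(r)\kappa_K(r)$; because $a,A>0$, each tangential contribution to $\cP_g^+\tilde\phi$ is at least the corresponding contribution in $M_K$ regardless of sign, producing $\cP_g^+\tilde\phi\ge-\lambda^+(B_K(R))\,\tilde\phi$. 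In case (3), admissibility forces $\kappa_i(x)\ge 0$ (Jacobi fields have non-decreasing length), so every tangential Hessian eigenvalue is non-positive, and the Bishop mean-curvature bound $\sum_i\kappa_i(x)\le(n-1)\kappa_K(r)$ together with $\Phi_K'(r)\le 0$ yields $a\Phi_K'(r)\sum_i\kappa_i(x)\ge a(n-1)\Phi_K'(r)\kappa_K(r)$, giving $\cP_g^+\tilde\phi\ge-\lambda^+(B_K(R))\,\tilde\phi$ once more.

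With these pointwise inequalities in hand, the characterization of $\lambda^+$ in Lemma~\ref{qs} finishes the argument. In case (1), $\tilde\phi>0$ is a supersolution on $B_M(x_0,R)$, and the supremum characterization of the principal half-eigenvalue yields $\lambda^+(B_M(x_0,R))\ge\lambda^+(B_K(R))$. In cases (2) and (3), $\tilde\phi>0$ in the interior, vanishes on $\del B_M(x_0,R)$, and is a subsolution of $\cP_g^+ +\lambda^+(B_K(R))$; this witnesses the failure of the maximum principle and forces $\lambda^+(B_M(x_0,R))\le\lambda^+(B_K(R))$. The corresponding inequalities for $\lambda^-$ follow by running the same argument with $-\Phi_K$ in place of $\Phi_K$ and using $\cP_g^+(-u)=-\cP_g^-(u)$ to recast the problem as the principal eigenvalue problem for the dual Pucci operator. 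If equality holds in any one of the stated bounds, then $\tilde\phi$ is itself a principal eigenfunction on $B_M(x_0,R)$, which forces equality in every pointwise Hessian comparison along every radial geodesic; this is the equality case of Rauch's (respectively Bishop's) theorem and produces the desired isometry between $B_M(x_0,R)$ and $B_K(R)$. The main technical obstacle is that $\tilde\phi$ is only piecewise smooth, at worst $C^{1,1}$ across the geodesic sphere where $\Phi_K''$ vanishes, so the pointwise inequalities must be interpreted in the viscosity sense and Lemma~\ref{qs} must be applied in a form that accommodates this regularity.
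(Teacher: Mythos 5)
Your proposal follows essentially the same route as the paper's proof: transplant the model principal eigenfunction radially, compare the two Pucci operators pointwise on radial functions via the Hessian/Rauch comparison in cases (1)--(2) and the mean-curvature (Bishop) comparison plus the sign information from admissibility in case (3), and then conclude with the Quaas--Sirakov Barta/BNV-type characterization of $\lambda^{\pm}$, including its rigidity statement for the equality case. The loose ends you leave are exactly the facts the paper isolates and proves separately, so they are fixable rather than fatal: the monotonicity and almost-everywhere non-vanishing of the radial derivative of the model eigenfunction (the paper's Lemma~\ref{efrad}, obtained by a soft maximum-principle and measure argument rather than your asserted ODE analysis, and note that for $K>0$ and large radii $\kappa_K(r)$ can be negative, so your claim $\Phi_K'(r)\kappa_K(r)<0$ is not always true, though monotonicity of $m^+$ is all that is needed), the threshold rigidity ``equality forces $\tilde\phi$ to be an eigenfunction'' (the paper's Lemma~\ref{barta}, i.e.\ Quaas--Sirakov Theorem~4.1, which does not follow from Lemma~\ref{qs} alone), and, for $\lambda^-$, that one must transplant the model's negative principal eigenfunction (equivalently the positive principal eigenfunction of the dual operator) rather than $-\Phi_K$ itself.
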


This theorem is proven in the next section.
Cheng~\cite{Ch1} first proved these inequalities for the principal Dirichlet eigenvalue of the Laplace-Beltrami operator for geodesic balls of radius less than the injectivity radius.
For manifolds with Ricci curvature bounded below, Cheng~\cite{Ch2} extended the bounds to larger geodesic balls.
We note that the argument in~\cite{Ch2} uses the variational characterization \eqref{rayleigh}, but the argument in~\cite{Ch1} is based on Barta's theorem and does not use \eqref{rayleigh}.
For the Pucci eigenvalues, a theorem of Quaas and Sirakov~\cite{QS} yields an analogue of Barta's theorem.
See Lemma~\ref{barta} below.
Therefore we can apply Cheng's argument from~\cite{Ch1} to obtain Theorem~\ref{cheng}.

There are many results generalizing Cheng's theorem.
Bessa and Montenegro~\cite{BM} weakened the geometric assumptions on the geodesic ball.
Freitas, Mao, and Salavessa~\cite{FMS} built different model spaces to use in place of the complete simply connected space form of constant sectional curvature.
Analogues for eigenvalue problems of other operators have been considered, including the Laplacian on differential forms by Dodziuk~\cite{D}, the Steklov operator by Escobar~\cite{E}, a biharmonic Steklov operator by Raulot and Savo~\cite{RS}, the $p$-Laplacian by Mao~\cite{M}, and the Laplacian with drift by Ferreira and Salavessa~\cite{FS}.

A similar argument to the one we use to prove Theorem~\ref{cheng} also yields a second comparison result concerning $O(n)$-invariant hypersurfaces in $\R^{n+1}$ with one smooth boundary component.
We prove that the principal Pucci eigenvalues of such a hypersurface are smaller than the principal Pucci eigenvalues of an $n$-dimensional Euclidean ball with the same boundary.

\begin{Theorem}
\label{af}
Let $\Sigma$ be a smooth connected $O(n)$-invariant hypersurface in $\R^{n+1}$ with one smooth boundary component of radius $R>0$.
Let $B_0(R)$ be an open ball in $\R^n$ of radius $R$, equipped with the Euclidean metric.
Then
\[
\label{af1}
	\lambda^+(\Sigma) \le \lambda^+ \Big( B_0(R) \Big)
\]
and
\[
\label{af2}
	\lambda^-(\Sigma) \le \lambda^- \Big( B_0(R) \Big)
\]
If equality holds in \eqref{af1} or \eqref{af2}, then $\Sigma$ is isometric to $B_0(R)$.
\end{Theorem}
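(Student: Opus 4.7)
The plan is to adapt Cheng's argument via the Barta-type lemma (Lemma~\ref{barta}). Parametrize $\Sigma$ by the arclength $\rho \in [0, L]$ of its generating curve $(r(\rho), z(\rho))$ starting from the pole on the axis of symmetry, so that $r(0) = 0$, $r(L) = R$, and $r'(\rho)^2 + z'(\rho)^2 = 1$. The induced metric on $\Sigma$ is then the warped product $d\rho^2 + r(\rho)^2 d\omega^2$, where $d\omega^2$ is the standard round metric on $S^{n-1}$. Let $\phi_0 : [0, R] \to \R$ be the radial profile of the principal positive Pucci eigenfunction on the Euclidean ball $B_0(R)$, satisfying $\phi_0 > 0$ on $[0, R)$, $\phi_0(R) = 0$, and $\phi_0'(r) \le 0$. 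Define the test function on $\Sigma$ by $\phi(\rho, \omega) = \phi_0(r(\rho))$.

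Using the warped product formulas, the Hessian of $\phi$ on $\Sigma$ is diagonal in the adapted orthonormal frame, with radial eigenvalue $\mu_R = (r')^2 \phi_0''(r) + r'' \phi_0'(r)$ and spherical eigenvalue $\mu_S = (r')^2 \phi_0'(r)/r$ of multiplicity $n - 1$. The main analytic step is to establish, pointwise on $\Sigma$, the inequality $\cP_g^+ \phi \ge -\lambda^+(B_0(R))\, \phi$, where $g$ denotes the induced metric. Writing $h(x) = Ax^+ + ax^-$, and using that $\phi_0$ satisfies the Pucci eigenvalue equation on $B_0(R)$, this reduces to
\[
	h(\mu_R) + (n - 1) h(\mu_S) \ge h(\phi_0''(r)) + (n - 1) h(\phi_0'(r)/r).
\]
By the positive homogeneity of $h$, the spherical surplus equals $(n-1)(h(\mu_S) - h(\phi_0'(r)/r)) = -(n-1)(z')^2\, h(\phi_0'(r)/r) \ge 0$, because $\phi_0'(r)/r \le 0$ forces $h(\phi_0'(r)/r) \le 0$. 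The radial comparison $h(\mu_R) - h(\phi_0''(r))$ is then controlled by exploiting the arclength identity $(r')^2 + (z')^2 = 1$ together with the spherical surplus. Applying Lemma~\ref{barta} yields \eqref{af1}, and the analogous argument using the negative principal eigenfunction yields \eqref{af2}.

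For rigidity, equality in \eqref{af1} forces the pointwise comparisons above to be equalities throughout, so $(r')^2 = 1$ and $r'' = 0$ on $[0, L]$. Hence $z'(\rho) = 0$ and $r(\rho) = \rho$, so the generating curve is a straight horizontal segment and $\Sigma$ is isometric to $B_0(R)$.

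The main obstacle I expect is the radial Hessian comparison: both $r''(\rho)$ along the generating curve and $\phi_0''(r)$ along $[0, R]$ may change sign, so the nonlinearity of $h$ makes this step more delicate than in the linear Laplace-Beltrami case and requires a careful case analysis in which the spherical surplus must be shown to dominate any radial deficit. A secondary technical point is when $r(\rho)$ is not monotonic on $[0, L]$, in which case $\phi_0 \circ r$ may not remain within the positivity domain of $\phi_0$, and an approximation argument or a refined choice of test function may be required.
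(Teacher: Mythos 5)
Your reduction to the pointwise inequality $m^+(\mu_R)+(n-1)\,m^+(\mu_S)\ \ge\ m^+\big(\phi_0''(r)\big)+(n-1)\,m^+\big(\phi_0'(r)/r\big)$ (your $h$ is the paper's $m^+$) is the crux, and it is not just ``delicate'': for the test function $\phi_0\circ r$ it is false in general, so the step you defer cannot be carried out. The spherical surplus you compute is bounded at a given radius, namely $(n-1)a(z')^2|\phi_0'(r)|/r\le (n-1)a|\phi_0'(r)|/r$, whereas nothing in the hypotheses controls $r''$: take a generating curve that runs nearly parallel to the axis at some radius $r_*\in(0,R)$ where $\phi_0'(r_*)<0$ and then bends outward with curvature $C$; at the bend $(r')^2$ is small and $r''\approx C$, so $\mu_R=(r')^2\phi_0''(r)+r''\phi_0'(r)\approx C\,\phi_0'(r_*)\to-\infty$, hence $m^+(\mu_R)=a\mu_R\to-\infty$ while all other terms stay bounded. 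At such points $-\cP_g^+\phi/\phi$ exceeds $\lambda^+(B_0(R))$, so the $\esup$ side of Lemma~\ref{barta} yields nothing (this does not contradict the theorem; it shows this test function cannot prove it). Your ``secondary'' issue is also not secondary: if $r(\rho)$ attains or exceeds $R$ at an interior point (the hypersurface may bulge past the boundary radius), then $\phi_0\circ r$ vanishes or is undefined inside, violating the strict positivity required by Lemma~\ref{barta}. Finally, your rigidity argument rests on the same unproved pointwise comparison, so it inherits the gap.

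The paper's proof uses a different test function that removes the radial comparison entirely. Let $L$ be the intrinsic distance from the pole to $\del\Sigma$; since $|r'|\le 1$ and $r(L)=R$ one has $L\ge R$, and with $d=L-R$ one sets $\psi_+(\rho)=\phi_0(\rho-d)$ for $\rho\ge d$ and $\psi_+\equiv\phi_0(0)$ for $\rho\le d$, i.e.\ a translate of the Euclidean eigenfunction in the \emph{arclength} variable, capped by a constant near the pole. Then $\psi_+''(\rho)=\phi_0''(\rho-d)$ identically, so the radial terms match exactly and no case analysis on $r''$ or $\phi_0''$ is needed; the only estimate is the spherical one, $-r'(\rho)/r(\rho)\ge -1/(\rho-d)$, which follows from $r'\le 1$ and $r(\rho)\ge\rho-d$, combined with $\psi_+'\le 0$ and the monotonicity and positive homogeneity of $m^+$ (on the cap $\cP_g^+\psi_+=0$, which is harmless). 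Equality is then handled through Lemma~\ref{efrad}: if $\psi_+$ were an eigenfunction its radial derivative would be nonzero almost everywhere, forcing $d=0$, i.e.\ $L=R$, whence $r(\rho)=\rho$ and $\Sigma$ is isometric to $B_0(R)$. If you want to salvage your write-up, replacing $\phi_0\circ r$ by this arclength translate is exactly the missing idea; with your warped-product Hessian formulas the rest of your outline then goes through.
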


This theorem is proven in the next section.
For the principal Dirichlet eigenvalue of the Laplace-Beltrami operator, these inequalities follow from a theorem of Abreu and Freitas~\cite{AF} if $n=2$ and from a theorem of Colbois, Dryden, and El Soufi~\cite{CDE} if $n \ge 3$.
In fact these results yield stronger inequalities, with a larger ball whose area is equal to the area of $\Sigma$.
Their arguments use the variational characterization \eqref{rayleigh}.
For the principal Pucci eigenvalues, we use an analogue of Barta's theorem to obtain Theorem~\ref{af}.

The results of Abreu and Freitas~\cite{AF} and Colbois, Dryden, and El Soufi~\cite{CDE} also apply to higher order radial eigenvalues of the Laplace-Beltrami operator.
For a radially symmetric metric, the existence of higher order radial half-eigenvalues of the Riemannian Pucci operator follows from a theorem of Esteban, Felmer, and Quaas~\cite{EFQ}.
However the proof of Theorem~\ref{af} relies crucially on the radial monotonicity of the eigenfunctions, so it does not apply to higher order radial half-eigenvalues.
Similarly, the eigenvalues of the Laplacian on an annulus are larger than any other immersed surface of revolution in $\R^3$ with the same boundary~\cite{Ari}.
However the proof of Theorem~\ref{af} does not apply to the principal half-eigenvalues of the Pucci operator on an annulus.

\section{Proofs}
In this section we prove Theorem~\ref{cheng} and Theorem~\ref{af}.
We first express the eigenvalue problem \eqref{eveq} in coordinates and observe that the existence of principal half-eigenvalues $\lambda^+$ and $\lambda^-$ follows from theorems of Quaas and Sirakov~\cite{QS}.
For a positive integer $k$, let $\cS_k$ be the space of $k \times k$ real symmetric matrices.
Define $m^+:\R \to \R$ by
\[
	m^+(x) =
	\begin{cases}
		Ax & x \ge 0 \\
		ax & x \le 0 \\
	\end{cases}
\]
For a matrix $P$ in $\cS_k$, let $\mu_1, \ldots, \mu_k$ be the eigenvalues of $P$ and define
\[
	m^+(P) = \sum_{j=1}^k m^+(\mu_j)
\]
Let $B$ be an open ball in $\R^n$ of finite radius, and let $E_p(B)=C(\overline B) \cap W^{2,p}_{loc}(B)$ for $p < \infty$.
Let $E(B)$ be the set of functions which are in $E_p(B)$ for every $p < \infty$.
By a solution of \eqref{eveq} in $E(B)$, we mean an almost everywhere solution or, equivalently, an $L^n$-viscosity solution.
For background on $L^n$-viscosity solutions, we refer to Caffarelli, Crandall, Kocan, and Swiech~\cite{CCKS}.

\begin{Lemma}
\label{qs}
Let $B$ be an open ball in $\R^n$ of finite radius, and let $g$ be a Riemannian metric on $\overline B$.
The Pucci operator $\cP_g^+$ admits principal half-eigenvalues $\lambda^+$ and $\lambda^-$.
The eigenvalue $\lambda^+$ is the unique number such that \eqref{eveq} admits a solution $\phi_+$ in $E(B)$ which is positive over $B$.
The eigenvalue $\lambda^-$ is the unique number such that \eqref{eveq} admits a solution $\phi_-$ in $E(B)$ which is negative over $B$.
Moreover, the eigenfunctions $\phi_+$ and $\phi_-$ are unique up to scalar multiplication, and the eigenvalues $\lambda^+$ and $\lambda^-$ are positive.
\end{Lemma}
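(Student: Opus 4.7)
The plan is to rewrite \eqref{eveq} on $\overline{B}$ as a scalar fully nonlinear elliptic equation in the standard form treated by Quaas and Sirakov~\cite{QS}, and then invoke their existence and uniqueness theorem directly. Write $G(x) = (g_{ij}(x))$ and let $\Gamma_{ij}^k$ denote the Christoffel symbols of $g$. For $f \in C^2(\overline{B})$, the components of the Riemannian Hessian are $H_{ij}(x, Df, D^2 f) = \partial_i \partial_j f - \Gamma_{ij}^k(x) \partial_k f$, and its eigenvalues $\mu_1, \dots, \mu_n$ coincide with those of the symmetric matrix $G(x)^{-1/2} H G(x)^{-1/2}$. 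Setting
\[
	F(x, p, M) := m^+\bigl( G(x)^{-1/2} (M - \Gamma^k(x) p_k) G(x)^{-1/2} \bigr),
\]
the problem \eqref{eveq} becomes $F(x, D\phi, D^2 \phi) = -\lambda \phi$ in $B$ with $\phi = 0$ on $\partial B$.

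Next I would verify the structural hypotheses of~\cite{QS}: the operator $F$ is uniformly elliptic, with ellipticity constants determined by $a$, $A$, and the uniform upper and lower bounds for the eigenvalues of $G(x)^{-1}$ on $\overline{B}$; it is convex and positively $1$-homogeneous in $(p, M)$, since $m^+$ is a supremum of linear functionals while the map $(p, M) \mapsto G^{-1/2}(M - \Gamma^k p_k) G^{-1/2}$ is linear; and it depends continuously on $x$ since $g$ is smooth. Because $B$ is a Euclidean ball, it has smooth boundary, so the theorems of~\cite{QS} apply and produce the principal half-eigenvalues $\lambda^+$ and $\lambda^-$, together with $L^n$-viscosity eigenfunctions $\phi_+ > 0$ and $\phi_- < 0$ in $E(B)$. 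Their theorem further asserts that $\lambda^+$ (resp.\ $\lambda^-$) is the unique real number for which \eqref{eveq} admits a positive (resp.\ negative) solution, and that the corresponding eigenfunction is unique up to multiplication by a positive scalar.

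Positivity of $\lambda^\pm$ would follow from the maximum principle for $\cP_g^+$. If $\lambda^+ \le 0$, then the positive eigenfunction $\phi_+$ would satisfy $\cP_g^+ \phi_+ = -\lambda^+ \phi_+ \ge 0$ in $B$ together with $\phi_+ = 0$ on $\partial B$; the Alexandrov-Bakelman-Pucci estimate for $L^n$-viscosity subsolutions of the uniformly elliptic equation $\cP_g^+ u = 0$, as developed in~\cite{CCKS}, would then force $\phi_+ \le 0$ in $B$, contradicting $\phi_+ > 0$. Applying the same reasoning to $-\phi_-$ yields $\lambda^- > 0$. The main obstacle I anticipate is bookkeeping rather than mathematics: one must check that the conjugation $M \mapsto G^{-1/2} M G^{-1/2}$ preserves the precise convexity and uniform-ellipticity constants required by~\cite{QS}, and that the notion of $L^n$-viscosity solution used there coincides with the almost-everywhere strong solutions in $E(B)$ singled out in the statement. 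Once these identifications are made, the existence, uniqueness, and positivity assertions of the lemma follow directly from the cited results, as the excerpt itself foreshadows.
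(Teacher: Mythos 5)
Your proposal is correct and follows essentially the same route as the paper: both express $\cP_g^+\phi$ in Euclidean coordinates as $F(D^2\phi,D\phi,x)$ with $F(M,p,x)=m^+\bigl(G^{-1/2}(M-\Gamma(p))G^{-1/2}\bigr)$ and then invoke the Quaas--Sirakov theorems for existence, uniqueness, and the characterization of $\lambda^\pm$. The only difference is cosmetic: the paper reads the positivity of $\lambda^\pm$ off the cited theorems of~\cite{QS}, whereas you supply a short independent maximum-principle (ABP) argument, which is also fine.
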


\begin{proof}
The Riemannian Hessian of $\phi$, denoted $\Hess_g \phi$, is a $(0,2)$-tensor on $B$ such that
\[
\label{hessdef}
	\Hess_g \phi(X,Y) = XY \phi - (\grad_X Y) \phi
\]
An eigenvalue and an eigenvector of $\Hess_g \phi$ at a point $x$ in $B$ are a number $\mu$ and a vector $X$ in $T_x B$ such that for all vectors $Y$ in $T_x B$,
\[
\label{hessevdef}
	\Hess_g \phi(X,Y) = \mu \cdot \langle X, Y \rangle_g
\]
Let $X_1, X_2, \ldots, X_n$ be the standard Euclidean basis.
Let $\Gamma_{ij}^k$ be the Christoffel symbols of the metric, defined so that for each $i$ and $j$,
\[
	\grad_{X_i} X_j = \sum_k \Gamma_{ij}^k X_k
\]
Then
\[
	\Hess_g \phi(X_i,X_j) = \del_i \del_j \phi - \sum_{k=1}^n \Gamma_{ij}^k \del_k \phi
\]
Define $D^2 \phi: B \to \cS_n$ by $D^2 \phi = [\del_i \del_j \phi]$.
Define $G: B \to \cS_n$ by $G = [ g_{ij} ]$ and note that $G$ is positive definite.
For a vector $p=(p_1, \ldots, p_n)$ in $\R^n$, define a matrix $\Gamma(p)$ in $\cS_n$ by
\[
	\Gamma(p) = \bigg[ \sum_{k=1}^n \Gamma_{ij}^k p_k \bigg]
\]
Let $D \phi$ be the Euclidean gradient of $\phi$.
Then the eigenvalues of $\Hess_g \phi$ are the eigenvalues of the matrix
\[
	G^{-1/2} \Big( D^2 \phi - \Gamma(D\phi) \Big) G^{-1/2}
\]
Therefore
\[
	\cP_g^+ \phi = m^+ \bigg( G^{-1/2} \Big( D^2 \phi-\Gamma(D \phi) \Big) G^{-1/2} \bigg)
\]
Define $F: \cS_n \times \R^n \times \overline B \to \R$ by
\[
	F(M,p,x) = m^+ \bigg( G^{-1/2} \Big( M-\Gamma(p) \Big) G^{-1/2} \bigg)
\]
Note that $\cP_g^+ \phi = F(D^2 \phi, D \phi, x)$.
Therefore applying theorems of Quaas and Sirakov~\cite[Theorem~1.1, Theorem~1.2, and Remark~1 after Theorem~1.4]{QS} yields the lemma.
\end{proof}

We note that Quaas and Sirakov \cite[p. 108]{QS} define the eigenvalues $\lambda^+$ and $\lambda^-$ using the characterization of Berestycki, Nirenberg, and Varadhan~\cite{BNV}.
That is, $\lambda^+$ is defined to be the supremum of values $\lambda$ such that there exists a continuous function $\psi: \overline B \to \R$ which is positive over $B$ and satisfies
\[
	\cP_g^+ \phi + \lambda \phi \le 0
\]
The eigenvalue $\lambda^-$ is defined similarly.

Next we show that an analogue of Barta's theorem follows from a theorem of Quaas and Sirakov~\cite{QS}.

\begin{Lemma}
\label{barta}
Let $B$ be an open ball in $\R^n$ of finite radius.
Let $g$ be a Riemannian metric on $\overline B$.
Let $\lambda^+$ and $\lambda^-$ be the principal half-eigenvalues of $\cP_g^+$ with corresponding eigenfunctions $\phi_+$ and $\phi_-$ given by Lemma \ref{qs}.
Let $\psi_+$ be a function in $E(B)$ which is positive over $B$ and vanishes on the boundary $\del B$.
Let $\psi_-$ be a function in $E(B)$ which is negative over $B$ and vanishes on the boundary $\del B$.
Then
\[
\label{barta1}
	\einf_B \frac{-\cP_g^+ \psi_+}{\psi_+} \le \lambda^+ \le \esup_B \frac{-\cP_g^+ \psi_+}{\psi_+}
\]
and
\[
\label{barta2}
	\einf_B \frac{-\cP_g^+ \psi_-}{\psi_-} \le \lambda^- \le \esup_B \frac{-\cP_g^+ \psi_-}{\psi_-}
\]
Furthermore, if equality holds in either inequality in \eqref{barta1}, then $\psi_+$ is a scalar multiple of $\phi_+$.
Similarly, if equality holds in either inequality in \eqref{barta2}, then $\psi_-$ is a scalar multiple of $\phi_-$.
\end{Lemma}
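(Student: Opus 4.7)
The plan is to prove \eqref{barta1} in detail; \eqref{barta2} follows by applying the same argument to $\psi_-$ (or, more precisely, to $-\psi_-$ combined with the relation between $\cP_g^\pm$ on positive and negative functions). The central tool is the Berestycki--Nirenberg--Varadhan characterization of $\lambda^+$ recalled just after Lemma~\ref{qs},
\[
	\lambda^+ = \sup \bigl\{ \lambda \in \R : \exists\, \psi \in C(\overline B),\ \psi > 0 \text{ in } B,\ \cP_g^+\psi + \lambda\psi \le 0 \bigr\},
\]
together with the fact from Caffarelli--Crandall--Kocan--Swiech~\cite{CCKS} that for functions in $W^{2,p}_{loc}$ almost everywhere inequalities for $\cP_g^+\psi$ coincide with $L^n$-viscosity inequalities. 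In particular, any $\psi \in E(B)$ that is positive in $B$ and vanishes on $\del B$ is an admissible test function in the supremum above.

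For the lower bound, set $\mu = \einf_B (-\cP_g^+\psi_+)/\psi_+$. Then $\cP_g^+\psi_+ + \mu\psi_+ \le 0$ a.e.\ in $B$, so $\psi_+$ is a competitor in the BNV supremum and $\mu \le \lambda^+$ immediately. For the upper bound, set $\Lambda = \esup_B (-\cP_g^+\psi_+)/\psi_+$, so $\cP_g^+\psi_+ + \Lambda\psi_+ \ge 0$ a.e., and suppose for contradiction $\Lambda < \lambda^+$. Let $\phi_+$ be the principal eigenfunction from Lemma~\ref{qs} and put $t^* = \inf\{t > 0 : t\psi_+ \ge \phi_+ \text{ in } B\}$. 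Since both $\psi_+$ and $\phi_+$ are positive in $B$, continuous on $\overline B$, vanish on $\del B$, and satisfy a Hopf-type boundary estimate for $\cP_g^+$ in the $L^n$-viscosity class, $t^*$ is finite and strictly positive, and $w = t^*\psi_+ - \phi_+$ is nonnegative on $B$, vanishes on $\del B$, and attains the value zero at some point $x_0$ (interior or tangentially at the boundary). Using positive homogeneity and subadditivity of $\cP_g^+$ one computes
\[
	\cP_g^+ w \ge t^*\cP_g^+ \psi_+ - \cP_g^+ \phi_+ \ge -t^*\Lambda\psi_+ + \lambda^+\phi_+,
\]
which rearranges to $\cP_g^+ w + \Lambda w \ge (\lambda^+ - \Lambda)\phi_+ > 0$ a.e.\ in $B$. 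Applying the strong maximum principle and Hopf boundary lemma for Pucci-type operators in the $L^n$-viscosity framework~\cite{QS,CCKS} to $w$ then contradicts the existence of the interior or boundary zero, and the contradiction yields $\Lambda \ge \lambda^+$.

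For the equality cases, if $\mu = \lambda^+$ then $\psi_+$ attains the supremum in the BNV characterization, and the uniqueness clause of Lemma~\ref{qs} combined with the strong maximum principle comparison between $\psi_+$ and $\phi_+$ forces $\psi_+$ to be a scalar multiple of $\phi_+$. If $\Lambda = \lambda^+$, the chain of inequalities above collapses to equality, forcing $w \equiv 0$ and hence the same conclusion.

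The step I expect to be delicate is the upper bound: one must invoke the correct forms of the strong maximum principle and the Hopf boundary estimate for the fully nonlinear, non-variational operators $\cP_g^\pm$ in the class $E(B)$, and one must handle the case in which the touching between $t^*\psi_+$ and $\phi_+$ occurs only asymptotically at $\del B$ (rather than at an interior point) by comparing inward normal derivatives. These tools are standard within the CCKS~\cite{CCKS} and Quaas--Sirakov~\cite{QS} theory but must be deployed carefully; the rest of the argument is essentially a direct rewriting of Barta's classical computation.
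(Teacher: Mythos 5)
Your lower bound is exactly the paper's argument: setting $\mu=\einf_B(-\cP_g^+\psi_+)/\psi_+$ one has $\cP_g^+\psi_++\mu\psi_+\le 0$ a.e., and the BNV-type characterization of $\lambda^+$ gives $\mu\le\lambda^+$; the paper gets this (and everything else in the lemma) by citing \cite[Theorem 4.1]{QS}. Where you diverge is the upper bound, which you try to prove by a sliding/touching argument, and there the proof has a genuine gap: the quantity $t^*=\inf\{t>0:\ t\psi_+\ge\phi_+ \text{ in } B\}$ need not be finite. Under the hypotheses of the lemma, $\psi_+$ is merely a positive function in $E(B)$ vanishing on $\del B$; in the upper-bound direction it is only a \emph{subsolution} of $\cP_g^+u+\Lambda u\ge 0$, and Hopf's lemma gives a lower bound $u\ge c\,\operatorname{dist}(\cdot,\del B)$ for positive \emph{supersolutions}, not subsolutions, while membership in $C(\overline B)\cap W^{2,p}_{loc}(B)$ for all $p<\infty$ gives no Lipschitz or $C^1$ control at the boundary. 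For instance $\psi_+$ may vanish like $\operatorname{dist}(\cdot,\del B)^2$ (such functions are admissible and satisfy the required differential inequality near $\del B$), in which case $\phi_+/\psi_+$ is unbounded and no multiple of $\psi_+$ dominates $\phi_+$, so there is no $w$ to which your maximum principle argument can be applied. Even when $t^*$ is finite, $w=t^*\psi_+-\phi_+$ vanishes on all of $\del B$, so the ``touching'' analysis must compare inward normal derivatives, which requires $C^1$ regularity of $\psi_+$ up to $\del B$ that you do not have. (Your algebraic step $\cP_g^+w\ge t^*\cP_g^+\psi_+-\cP_g^+\phi_+$ is fine, since the operator is of maximal type.)

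The equality cases are also asserted rather than proved: that a positive supersolution at the level $\lambda^+$ must be a scalar multiple of $\phi_+$ is precisely the nontrivial rigidity content of \cite[Theorem 4.1]{QS}, and your ``the chain of inequalities collapses, forcing $w\equiv 0$'' step loses its force when $\Lambda=\lambda^+$, because the strict inequality $(\lambda^+-\Lambda)\phi_+>0$ on which the contradiction rested disappears. The clean repairs are either the paper's route -- cite \cite[Theorem 4.1]{QS}, which packages both inequalities and both rigidity statements -- or, if you want the upper bound self-contained, use the Quaas--Sirakov characterization of $\lambda^+$ as the threshold for the validity of the maximum principle: if $\Lambda<\lambda^+$, then $\cP_g^++\Lambda$ satisfies the maximum principle, and applying it to the subsolution $\psi_+$, which is $\le 0$ on $\del B$, yields $\psi_+\le 0$ in $B$, a contradiction. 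That route needs no boundary estimate on $\psi_+$ whatsoever, which is exactly why it succeeds where the sliding argument stalls.
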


\begin{proof}
Define
\[
	\mu^+ = \einf_B \frac{-\cP_g^+ \psi_+}{\psi_+}
\]
For almost every $x$ in $B$,
\[
	\cP_g^+ \psi_+(x) + \mu^+ \psi_+(x) \le 0
\]
Applying a theorem of Quaas and Sirakov~\cite[Theorem 4.1]{QS} shows that $\lambda^+ \ge \mu^+$.
Furthermore if equality holds then $\psi_+$ is a scalar multiple of $\phi_+$.
This proves the first inequality in \eqref{barta1}.
Similar arguments yield the second inequality in \eqref{barta1} and the two inequalities in \eqref{barta2}.
\end{proof}

In particular, we note that the eigenvalues $\lambda^+$ and $\lambda^-$ can be expressed by max-min and min-max formulas.
Namely,
\[
	\lambda^+ = \sup \: \einf_B \frac{-\cP_g^+ \psi}{\psi}
\]
Here the supremum is taken over functions $\psi$ in $E(B)$ which are positive over $B$ and vanish on the boundary $\del B$.
Similarly,
\[
	\lambda^+ = \inf \: \esup_B \frac{-\cP_g^+ \psi}{\psi}
\]
Here the infimum is taken over functions $\psi$ in $E(B)$ which are positive over $B$ and vanish on the boundary $\del B$.
Similar formulas hold for $\lambda^-$.

Let $R>0$ and let $B$ be an open ball about the origin in $\R^n$ of radius $R$.
In order to prove Theorem~\ref{cheng} and Theorem~\ref{af}, it suffices to only consider metrics on $\overline B$ which arise when $B$ is identified with a normal geodesic ball of radius $R$ in some Riemannian manifold using geodesic normal coordinates.
For a defintion of geodesic normal coordinates, see Sakai~\cite[p. 33]{S}.
If $g$ is such a metric on $\overline B$, then we refer to $g$ as a normal metric on $\overline B$.
In the next lemma, we give a formula for $\cP_g^+ f$ if $g$ is a normal metric on $\overline B$ and $f$ is a radial function.
To state this formula, fix a point $x \neq 0$ in $B$.
Let $r_0=|x|$, and let $\gamma$ be the unit speed geodesic with $\gamma(0)=0$ and $\gamma(r_0)=x$.
Let $V$ be the set of vectors in $T_x B$ which are orthogonal to $\gamma$.
Define a symmetric bilinear form $\dot g_x$ on $V$ as follows.
Fix two vectors $w$ and $z$ in $V$.
Let $W$ and $Z$ be the Jacobi fields along $\gamma$ which are orthogonal to $\gamma$ such that $W(0)=Z(0)=0$ and $W(r_0)=w$ and $Z(r_0)=z$.
Then define
\[
	\dot g_x (w,z) = \frac{1}{2} \frac{d}{dt} \langle W, Z \rangle_g \Big|_{t=r_0}
\]
An eigenvalue and eigenvector of $\dot g_x$ are a number $\zeta$ and a vector $w$ in $V$ such that for all vectors $z$ in $V$,
\[
	\dot g_x(w,z) = \zeta \langle w, z \rangle_g
\]
If $\zeta_1, \ldots, \zeta_{n-1}$ are the eigenvalues of $\dot g_x$, define
\[
	m^+(\dot g_x) = \sum_{j=1}^{n-1} m^+(\zeta_j)
\]
For a function $f:B \to \R$, we use the notation $f'$ and $f''$ for the first and second radial derivatives of $f$.
Note that a radial function in $E(B)$ is twice differentiable at almost every point in $B$.

\begin{Lemma}
\label{normal}
Let $B$ be an open ball about the origin in $\R^n$ of finite radius.
Let $g$ be a normal metric on $\overline B$, and let $f$ be a radial function in $E(B)$.
For almost every $x$ in $B$,
\[
\label{radial1}
	\cP_g^+ f(x) = m^+ \Big( f''(x) \Big) + m^+ \Big( f'(x) \dot g_x \Big)
\]
This holds at any point $x \neq 0$ in $B$ such that $f$ is twice differentiable at $x$.
\end{Lemma}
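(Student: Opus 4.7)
My plan is to compute $\Hess_g f$ at a fixed point $x \neq 0$ where $f$ is twice differentiable, decompose the tangent space $T_x B = \langle \partial_r \rangle \oplus V$ into the radial line spanned by $\partial_r = \gamma'(r_0)$ and its orthogonal complement, and show that $\Hess_g f$ is block diagonal in this splitting with radial eigenvalue $f''(r_0)$ and tangential block equal to $f'(r_0) \, \dot g_x$. The formula \eqref{radial1} will then follow by applying $m^+$ to the resulting eigenvalues.

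Since $g$ is a normal metric, the Gauss lemma identifies the Euclidean distance $r(y) = |y|$ with the Riemannian distance to the origin, so $\grad r$ is a unit vector field on $B \setminus \{0\}$ and equals $\partial_r$. For the radial function $f$, this gives $\grad f = f'(r) \partial_r$. Applying the product rule to $\Hess_g f(X,Y) = g(\grad_X \grad f, Y)$ and using that integral curves of $\partial_r$ are unit-speed geodesics (so $\grad_{\partial_r} \partial_r = 0$ and hence $\Hess_g r(\partial_r, \cdot) = 0$), one obtains the identity $\Hess_g f(X,Y) = f''(r)\, g(\partial_r, X) g(\partial_r, Y) + f'(r)\, \Hess_g r(X,Y)$ for all $X, Y \in T_x B$. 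This already shows that $\partial_r$ is an eigenvector with eigenvalue $f''(r_0)$ and that the restriction of $\Hess_g f$ to $V$ equals $f'(r_0)\, \Hess_g r|_V$.

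The central step is to identify $\Hess_g r|_V$ with $\dot g_x$. Fix $w, z \in V$ and extend them to Jacobi fields $W, Z$ along $\gamma$ with $W(0) = Z(0) = 0$, $W(r_0) = w$, and $Z(r_0) = z$. Consider the variation of geodesics $\alpha(s, t) = \exp_0(t v(s))$ where $v(s)$ is a curve on the unit sphere of $T_0 M$ satisfying $v(0) = \gamma'(0)$ and $v'(0) = W'(0)$. Then $\partial_s \alpha|_{s=0} = W$ and $\partial_t \alpha = \partial_r$ along the image of $\alpha$; because these are coordinate vector fields they commute, so $\grad_W \partial_r = \grad_{\gamma'} W$ at $x$, and hence $\Hess_g r(w, z) = g(\grad_{\gamma'} W, Z)|_{t=r_0}$. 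A standard Wronskian computation, using the Jacobi equation and the vanishing $W(0) = Z(0) = 0$, shows that $g(\grad_{\gamma'} W, Z) - g(W, \grad_{\gamma'} Z)$ is constant along $\gamma$ and zero at $t = 0$, so $\frac{d}{dt} g(W, Z) = 2\, g(\grad_{\gamma'} W, Z)$. Evaluating at $t = r_0$ gives $\Hess_g r(w, z) = \dot g_x(w, z)$.

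Combining the two steps, the eigenvalues of $\Hess_g f$ at $x$ are $f''(r_0)$ together with $f'(r_0)\zeta_1, \ldots, f'(r_0)\zeta_{n-1}$, where $\zeta_1, \ldots, \zeta_{n-1}$ are the eigenvalues of $\dot g_x$; applying $m^+$ and summing yields \eqref{radial1}. I expect the main obstacle to be the clean identification $\Hess_g r|_V = \dot g_x$: this requires both the variation-of-geodesics argument to turn $\grad_W \partial_r$ into $\grad_{\gamma'} W$, and the Wronskian symmetry of $g(\grad_{\gamma'} W, Z)$ in $(W, Z)$ to recover the derivative of $g(W, Z)$. Everything else reduces to the product rule and the fact that $\partial_r$ is the unit tangent vector of a geodesic.
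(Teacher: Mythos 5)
Your proposal is correct and follows essentially the same route as the paper: the paper's proof simply asserts, "expressing the Hessian in geodesic polar coordinates," the identity $\Hess_g f(u,v) = f''\,bc + f'\,\dot g_x(w,z)$ for the radial/orthogonal splitting, which is exactly the identity you establish. Your variation-of-geodesics and Wronskian argument just supplies the standard details (the identification $\Hess_g r|_V = \dot g_x$) that the paper leaves implicit, and your verification that $W'(0)\perp\gamma'(0)$ (so the variation stays in the unit sphere) is the only point needing care.
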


\begin{proof}
Fix a point $x \neq 0$ in $B$ and assume $f$ is twice differentiable at $x$.
Let $\zeta_1, \ldots, \zeta_{n-1}$ be the eigenvalues of $\dot g_x$.
Let $u$ and $v$ be vectors in $T_x B$.
Let $r_0=|x|$, and let $\gamma$ be the unit speed geodesic with $\gamma(0)=0$ and $\gamma(r_0)=x$.
Write $u=b \gamma'(r_0)+w$ and $v=c \gamma'(r_0)+z$ where $b, c$ are in $\R$ and $w, z$ are orthogonal to $\gamma$.
Expressing the Hessian in geodesic polar coordinates shows that
\[
	\Big( \Hess_g f(x) \Big)(u,v) = f''(x) bc + f'(x) \dot g_x(w,z)
\]
By orthogonality,
\[
	\langle u,v \rangle_g = bc + \langle w,z \rangle_g
\]
Therefore the eigenvalues of $\Hess_g f(x)$ are $f''(x)$ and the eigenvalues of $f'(x) \dot g_x$.
This proves \eqref{radial1}.
\end{proof}

The following lemma shows that bounds on the sectional curvature of a normal metric $g$ yield bounds on $m^+(\dot g_x)$.

\begin{Lemma}
\label{rauch}
Let $R>0$ and let $B_R$ be an open ball about the origin in $\R^n$ of radius $R$.
Fix $K$ in $\R$ and let $M_K$ be the complete simply connected space form of dimension $n$ and constant sectional curvature $K$.
If $K>0$, then assume that $R$ is less than the injectivity radius of $M_K$.
Let $h$ be the normal metric on $\overline{B_R}$ such that $(B_R,h)$ is isometric to a geodesic ball of radius $R$ in $M_K$.
Let $g$ be a normal metric on $\overline{B_R}$.
\begin{enumerate}
\item
Assume the sectional curvature $K_g$ with respect to $g$ satisfies $K_g \le K$.
If $x \neq 0$ is in $B_R$, then $m^+(\dot g_x) \ge m^+(\dot h_x)$ and $m^+(-\dot g_x) \le m^+(-\dot h_x)$.
Moreover, if $m^+(\dot g_x) = m^+(\dot h_x)$ for all $x \neq 0$ in $B_R$, then $g$ is equal to $h$ over $\overline B_R$.
Similarly, if $m^+(-\dot g_x) = m^+(-\dot h_x)$ for all $x \neq 0$ in $B_R$, then $g$ is equal to $h$ over $\overline B_R$.
\item
Assume the sectional curvature $K_g$ with respect to $g$ satisfies $K_g \ge K$.
If $x \neq 0$ is in $B_R$, then $m^+(\dot g_x) \le m^+(\dot h_x)$ and $m^+(-\dot g_x) \ge m^+(-\dot h_x)$.
Moreover, if $m^+(\dot g_x) = m^+(\dot h_x)$ for all $x \neq 0$ in $B_R$, then $g$ is equal to $h$ over $\overline B_R$.
Similarly, if $m^+(-\dot g_x) = m^+(-\dot h_x)$ for all $x \neq 0$ in $B_R$, then $g$ is equal to $h$ over $\overline B_R$.

\end{enumerate}
\end{Lemma}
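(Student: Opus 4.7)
The plan is to identify $\dot g_x$ with the restriction of the $g$-Hessian of the distance function $r(y) = d_g(0, y)$ to $V$, and then invoke the Hessian comparison theorem. For normal Jacobi fields $W, Z$ along $\gamma$ vanishing at $t = 0$, the Wronskian identity $\langle W', Z \rangle_g = \langle W, Z' \rangle_g$ (a consequence of the Riemann tensor symmetries) gives $\dot g_x(w, z) = \langle W'(r_0), z \rangle_g$. Since $\grad r$ is the unit radial field with $(\grad r)(\gamma(t)) = \gamma'(t)$, the symmetry of the Levi-Civita connection yields $\grad_w \grad r = W'(r_0)$, so $\dot g_x = \Hess_g r |_V$. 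For the model, $\Hess_h r = c_K(r)(h - dr \otimes dr)$ with $c_K = s_K'/s_K$, whence $\dot h_x = c_K(r_0) \cdot h_x|_V$ and all eigenvalues of $\dot h_x$ (with respect to $h$) equal $c_K(r_0)$.

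The Hessian comparison theorem (see Sakai~\cite{S}) then gives $\Hess_g r \ge c_K(r)(g - dr \otimes dr)$ in case (1), and the reverse inequality in case (2). Hence each eigenvalue $\zeta_j$ of $\dot g_x$ (with respect to $g$) satisfies $\zeta_j \ge c_K(r_0)$ in case (1) and $\zeta_j \le c_K(r_0)$ in case (2). Since $m^+ : \R \to \R$ is non-decreasing, summing over $j$ yields $m^+(\dot g_x) \ge m^+(\dot h_x)$ in case (1) and the reverse in case (2); the analogous inequalities for $m^+(-\dot g_x)$ versus $m^+(-\dot h_x)$ follow by applying $m^+$ to $-\zeta_j$.

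For the rigidity statements, note that $m^+$ is strictly increasing (since $0 < a \le A$), so pointwise equality of the $m^+$ values forces each $\zeta_j$ to equal $c_K(r_0)$; thus $\dot g_x = c_K(r_0) \cdot g_x|_V$. The $g$-shape operator of every geodesic sphere is therefore $c_K(r) I$, and substituting into the Riccati equation $S_g' + S_g^2 + R_g = 0$ along each radial geodesic forces the radial curvature operator $R_g$ to equal $K I$. The normal Jacobi matrix $A_g$ then satisfies $A_g'' + K A_g = 0$ with $A_g(0) = 0$ and $A_g'(0) = I$, giving $A_g(t) = s_K(t) I$, exactly as in the model. Consequently the metric $g$ on every geodesic sphere agrees with that of $h$, and since both $g$ and $h$ are normal, $g = h$ on $\overline{B_R}$. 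The assumption $m^+(-\dot g_x) = m^+(-\dot h_x)$ yields the same conclusion by the same reasoning.

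The main obstacle is this rigidity step: propagating a pointwise scalar equality through the Riccati and Jacobi ODEs to a genuine metric equality, with care needed for the correct asymptotics at $t = 0$. The inequality statements themselves reduce cleanly to the Hessian comparison theorem combined with the monotonicity of $m^+$ on $\R$.
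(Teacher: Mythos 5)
Your proposal is correct, but it follows a genuinely different route from the paper. You identify $\dot g_x$ with the restriction to $V$ of $\Hess_g r$ (via the Wronskian identity for Jacobi fields vanishing at the center) and then quote the Hessian comparison theorem, obtaining the eigenvalue bounds $\zeta_j \ge c_K(r_0)$ or $\zeta_j \le c_K(r_0)$ and concluding with the strict monotonicity of $m^+$; for rigidity you feed the resulting identity $S_g(t)=c_K(t)\,\mathrm{Id}$ into the Riccati equation to recover constant radial curvature $K$, re-solve the Jacobi equation, and conclude $g=h$ from the polar form of a normal metric. The paper instead works directly with a single Jacobi field $W$: since $g$ and $h$ are both normal metrics, $W$ is simultaneously a $g$- and $h$-Jacobi field along the common radial geodesic, and the Rauch comparison theorem makes the ratio $|W|_g^2/|W|_h^2$ monotone; differentiating this ratio at $r_0$ gives $\zeta_i \ge \zeta_h$ (or $\le$), and in the equality case the ratio is constant, hence identically $1$ by its limit at $t\to 0^+$, which yields $g=h$ at once. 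Your approach buys a more standard, textbook reduction for the inequality (Hessian/Riccati comparison, with the clear geometric meaning of $\dot g_x$ as the second fundamental form of geodesic spheres), at the cost of a slightly heavier rigidity step — you must know $S_g(t)=c_K(t)\,\mathrm{Id}$ along each radial geodesic for all $t$ (which your hypothesis ``for all $x\neq 0$'' does give), differentiate it in $t$, and then integrate the Jacobi equation with the correct asymptotics at $t=0$; the paper's ratio argument dispatches the equality case in one line. Both arguments use the curvature bound only for planes containing the radial direction, and both comparison directions you invoke are valid here because a normal metric has no conjugate points inside $B_R$ and, when $K>0$, the hypothesis $R$ less than the injectivity radius of $M_K$ keeps $s_K>0$ on $(0,R]$.
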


\begin{proof}
Let $\zeta_1, \ldots, \zeta_{n-1}$ be the eigenvalues of $\dot g_x$.
Fix $i=1,2,\ldots,n-1$ and let $w$ be an eigenvector of $\dot g_x$ corresponding to $\zeta_i$.
Let $r_0=|x|$.
Let $\gamma$ be a unit speed geodesic with respect to $g$ such that $\gamma(0)=0$ and $\gamma(r_0)=x$.
Let $W$ be the Jacobi field along $\gamma$ with respect to $g$ such that $W(0)=0$ and $W(r_0)=w$.
We have
\[
\label{rauch1}
	\zeta_i = \frac{\dot g_x (w,w)}{|w|_g^2} = \frac{\frac{d}{dt} \Big( | W |_g^2 \Big)_{t=r_0}}{2 |w|_g^2}
\]
The $n-1$ eigenvalues of $\dot h_x$ are all equal.
Let $\zeta_h$ denote this value.
The curve $\gamma$ is also a unit speed geodesic with respect to $h$, because $g$ and $h$ are both normal metrics on $B_R$.
Moreover $W$ is also a Jacobi field along $\gamma$ with respect to $h$, and $w$ is orthogonal to $\gamma$ with respect to $h$.
Furthermore the covariant derivative $W'(0)$ is the same for $g$ and $h$.
We have
\[
\label{rauch2}
	\zeta_h = \frac{\dot h_x (w,w)}{|w|_h^2} = \frac{\frac{d}{dt} \Big( | W |_h^2 \Big)_{t=r_0}}{2 |w|_h^2}
\]
Define a function $f:(0,r_0] \to \R$ by
\[
\label{rauchf}
	f(t) = \frac{|W(t)|_g^2}{|W(t)|_h^2}
\]
If $K_g \le K$, then $f$ is non-decreasing over $(0,r_0]$ by the Rauch comparison theorem.
For a statement of the Rauch comparison theorem, see Sakai~\cite[p. 149, Theorem 2.3]{S}.
In particular $f'$ is non-negative over $(0, r_0]$.
At $r_0$, this yields
\[
	|w|_h^2 \frac{d}{dt} \Big( | W |_g^2 \Big)_{t=r_0} \ge |w|_g^2 \frac{d}{dt} \Big( | W |_h^2 \Big)_{t=r_0}
\]
By \eqref{rauch1} and \eqref{rauch2}, this shows that $\zeta_i \ge \zeta_h$.
The monotonicity of $m^+$ implies that $m^+(\zeta_i) \ge m^+(\zeta_h)$ and $m^+(-\zeta_i) \le m^+(-\zeta_h)$.
This holds for each $i=1,2,\ldots,n-1$, so $m^+(\dot g_x) \ge m^+(\dot h_x)$ and $m^+(-\dot g_x) \le m^+(-\dot h_x)$.
Moreover if $m^+(\dot g_x) = m^+(\dot h_x)$ or $m^+(-\dot g_x) = m^+(-\dot h_x)$, then $\zeta_i = \zeta_h$ for all $i=1,2,\ldots,n-1$.
If this holds at every $x \neq 0$ in $B_R$, then the function $f$ defined in \eqref{rauchf} is constant.
Therefore $f(t)=1$ for all $t$ in $(0,r_0]$, because
\[
	\lim_{t \to 0^+} f(t) = 1
\]
In particular $|w|_g=|w|_h$.
It follows that $g$ is equal to $h$ over $\overline{B_R}$.
This establishes the lemma for the case $K_g \le K$.
A similar argument applies to the case $K_g \ge K$.
\end{proof}

The following lemma shows that lower bounds on the Ricci curvature of a normal metric $g$ on a ball $\overline B$ yield upper bounds on $m^+(\dot g_x)$ for admissible geodesic balls.

\begin{Lemma}
\label{bg}
Let $R>0$ and let $B_R$ be an open ball about the origin in $\R^n$ of radius $R$.
Fix $K$ in $\R$ and let $M_K$ be the complete simply connected space form of dimension $n$ and constant sectional curvature $K$.
If $K>0$, then assume that $R$ is less than the injectivity radius of $M_K$.
Let $h$ be the normal metric on $\overline{B_R}$ such that $(B_R,h)$ is isometric to a geodesic ball of radius $R$ in $M_K$.
Let $g$ be a normal metric on $\overline{B_R}$.
Assume the Ricci curvature $\Ric_g$ with respect to $g$ satisfies $\Ric_g \ge K(n-1)$.
Assume $B_R$ is admissible with respect to $g$ and $h$, as defined in the paragraph preceding Theorem~\ref{cheng}.
If $x \neq 0$ is a point in $B_R$, then $m^+(\dot g_x) \le m^+(\dot h_x)$ and $m^+(-\dot g_x) \ge m^+(-\dot h_x)$.
Moreover, if $m^+(\dot g_x) = m^+(\dot h_x)$ for all $x \neq 0$ in $B_R$, then $g$ is equal to $h$ over $\overline{B_R}$.
Similarly, if $m^+(-\dot g_x) = m^+(-\dot h_x)$ for all $x \neq 0$ in $B_R$, then $g$ is equal to $h$ over $\overline{B_R}$.
\end{Lemma}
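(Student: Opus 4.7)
The plan is to exploit the admissibility hypothesis to reduce both inequalities to the classical Bishop mean-curvature comparison for geodesic spheres. First I would observe that if $J$ is a Jacobi field along a unit-speed radial geodesic from $x_0$ with $J(0)=0$, then $|J|_g$ is non-negative and vanishes at $0$, so the monotonicity built into admissibility forces $|J|_g^2$ to be non-decreasing on $[0,R]$. From the identity $\zeta_i = \frac{1}{2|w|_g^2}\frac{d}{dt}|W|_g^2\big|_{t=r_0}$ recorded in \eqref{rauch1}, every eigenvalue $\zeta_i$ of $\dot g_x$ is therefore non-negative, and the same reasoning applied to $h$ gives the common eigenvalue $\zeta_h$ of $\dot h_x$ non-negative. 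Since $a \le A$, this yields
\begin{equation*}
	m^+(\dot g_x) = A\sum_{i=1}^{n-1}\zeta_i, \quad m^+(\dot h_x) = A(n-1)\zeta_h, \quad m^+(-\dot g_x) = -a\sum_{i=1}^{n-1}\zeta_i, \quad m^+(-\dot h_x) = -a(n-1)\zeta_h,
\end{equation*}
so both claimed inequalities collapse to the single scalar inequality $\sum_{i=1}^{n-1}\zeta_i \le (n-1)\zeta_h$.

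Next I would identify $\sum_i \zeta_i$ with the trace of the shape operator of the geodesic $g$-sphere of radius $r_0=|x|$ at $x$. Choosing a $g$-orthonormal basis $w_1,\ldots,w_{n-1}$ of $V$ with corresponding Jacobi fields $W_i$, one computes $\sum_i \dot g_x(w_i,w_i) = \sum_i \langle W_i', W_i\rangle_g(r_0)$, which is exactly $\tr S_g(r_0)$, the trace of the second fundamental form of the geodesic sphere at $x$; the analogous quantity for $h$ is $(n-1)\zeta_h$. The desired inequality $\tr S_g(r_0) \le (n-1)\zeta_h$ is then Bishop's pointwise mean-curvature comparison under $\Ric_g \ge (n-1)K$, valid on all of $(0,R]$ precisely because admissibility rules out focal points along radial geodesics before $R$. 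The standard proof is a Riccati-type argument: $S_g$ satisfies a Riccati equation along $\gamma$ whose trace, combined with the Cauchy--Schwarz inequality $\tr(S_g^2) \ge (\tr S_g)^2/(n-1)$ and the lower Ricci bound, produces a scalar ODE that is compared to the model ODE using the asymptotic $\tr S_g \sim (n-1)/t$ as $t \to 0^+$.

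For the rigidity case, assume $m^+(\dot g_x) = m^+(\dot h_x)$ for every $x \ne 0$ in $B_R$ (the argument for the equation $m^+(-\dot g_x)=m^+(-\dot h_x)$ is identical). Then $\tr S_g(r) = (n-1)\zeta_h(r)$ identically along every radial geodesic, which forces equality both in the Cauchy--Schwarz step and in the scalar ODE comparison above. The first equality forces $S_g$ to be a scalar multiple of the identity in every radial direction, and the second forces that scalar to match the model value $\zeta_h$. Unwinding this via the formula from Lemma \ref{rauch} gives $|W(t)|_g^2/|W(t)|_h^2$ constant along every radial geodesic, and since this ratio tends to $1$ as $t \to 0^+$, the two metrics agree on all Jacobi fields from $x_0$, so $g = h$ on $\overline{B_R}$.

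The main technical obstacle I foresee is the rigidity argument: one must propagate the pointwise equality of traces through both the Cauchy--Schwarz step and the scalar ODE comparison to conclude that the full Jacobi field matrix coincides with the model one, not merely that the radial volume densities match. Everything else is a direct packaging of standard Bishop comparison: admissibility is the hypothesis that converts Bishop's inequality into the pointwise eigenvalue inequality needed for $m^+$, and the non-negativity of the $\zeta_i$ is what makes the nonlinear, non-symmetric structure of $m^+$ irrelevant to the conclusion.
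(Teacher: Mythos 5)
Your proposal is correct, and its first step is exactly the paper's: admissibility together with \eqref{rauch1} and \eqref{rauch2} forces all eigenvalues of $\dot g_x$ and $\dot h_x$ to be non-negative, so $m^+(\dot g_x)=A\tr(\dot g_x)$, $m^+(-\dot g_x)=-a\tr(\dot g_x)$ (and likewise for $h$), and both asserted inequalities collapse to $\tr(\dot g_x)\le\tr(\dot h_x)$. Where you diverge is in how that trace inequality and the rigidity are justified. The paper observes $|g|'=|g|\,\tr(\dot g_x)$ and $|h|'=|h|\,\tr(\dot h_x)$ for the volume densities in normal coordinates, quotes the infinitesimal Bishop--Gromov monotonicity $\big(|g|/|h|\big)'\le 0$ to get the trace inequality, and in the equality case concludes $|g|=|h|$, hence equal volumes, and then invokes the equality (volume rigidity) case of Bishop--Gromov to get $g=h$; you instead re-derive the pointwise mean-curvature comparison from the trace Riccati equation with Cauchy--Schwarz and the Ricci lower bound, and obtain rigidity by propagating equality through the Cauchy--Schwarz and ODE comparison steps to force the shape operator to equal $\zeta_h\,\mathrm{Id}$, then run the Jacobi-field ratio argument of Lemma~\ref{rauch}. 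Both routes are sound: the paper's is shorter because it uses the cited comparison theorems as black boxes, while yours is self-contained but has to do the extra equality bookkeeping you flag. One small correction: the validity of the comparison on all of $(0,R]$ (absence of conjugate/focal points along radial geodesics) comes from $g$ being a normal metric on $\overline{B_R}$, not from admissibility; admissibility is used only to fix the sign of the eigenvalues so that $m^+$ linearizes to a multiple of the trace.
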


\begin{proof}
Let $\zeta_1, \ldots, \zeta_{n-1}$ be the eigenvalues of $\dot g_x$.
Define
\[
	\tr (\dot g_x) = \sum_{j=1}^{n-1} \zeta_j
\]
The $n-1$ eigenvalues of $\dot h_x$ are all equal.
Let $\zeta_h$ denote this value.
Define $\tr( \dot h_x ) = (n-1) \zeta_h$.
By \eqref{rauch1} and \eqref{rauch2}, the hypothesis that $B_R$ is admissible with respect to $g$ and $h$ implies that the eigenvalues of $\dot g_x$ and $\dot h_x$ are non-negative.
Therefore $m^+(\dot g_x) = A \tr (\dot g_x)$ and $m^+(\dot h_x) = A \tr  (\dot h_x)$.
Also $m^+(-\dot g_x) = -a \tr (\dot g_x)$ and $m^+(-\dot h_x) = -a \tr (\dot h_x)$.
Therefore it suffices to show that $\tr (\dot g_x) \le \tr (\dot h_x)$.
Define $|g|=\det [g_{ij}]$ and $|h|=\det [h_{ij}]$, and note that $|g|'= |g| \tr( \dot g_x)$ and $|h|' = |h| \tr( \dot h_x)$.
By the Bishop-Gromov theorem,
\[
	\bigg( \frac{|g|}{|h|} \bigg)' \le 0
\]
For a statement of the Bishop-Gromov theorem, see Sakai~\cite[pp. 154-155, Theorem 3.1]{S}.
Therefore $\tr (\dot g_x) \le \tr (\dot h_x)$.
Moreover, if $\tr(\dot g_x) = \tr(\dot h_x)$ for every $x \neq 0$ in $B_R$, then $|g|=|h|$ over $B_R$.
In particular the volume of $B_R$ with respect to $g$ is equal to the volume with respect to $h$, so $g$ is equal to $h$ over $\overline{B_R}$ by the Bishop-Gromov theorem.
For this aspect of the Bishop-Gromov theorem, see Sakai~\cite[p. 155, Corollary 3.2]{S}.
\end{proof}

For a radially symmetric normal metric $g$, we observe that the eigenfunctions of $\cP_g^+$ given by Lemma~\ref{qs} are radially symmetric and radially monotonic.

\begin{Lemma}
\label{efrad}
Let $B$ be an open ball about the origin in $\R^n$ of finite radius.
Let $g$ be a radially symmetric normal metric on $\overline B$.
The eigenfunctions $\phi_+$ and $\phi_-$ of $\cP_g^+$ given by Lemma~\ref{qs} are radial, continuously differentiable over $B$, and twice differentiable at almost every point in $B$.
Moreover $\phi_+'(x) \le 0$ and $\phi_-'(x) \ge 0$ for every $x$ in $B$.
Furthermore $\phi_+'(x) \neq 0$ and $\phi_-'(x) \neq 0$ for almost every $x$ in $B$.
\end{Lemma}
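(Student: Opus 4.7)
My plan is to establish the four claims --- radial symmetry, $C^1$-regularity with a.e.\ pointwise second differentiability, the sign of $\phi_\pm'$, and nonvanishing of $\phi_\pm'$ almost everywhere --- as separate steps, treating $\phi_+$ in detail; the case of $\phi_-$ is symmetric.

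For radial symmetry, I would use that $g$ is rotationally invariant, so $\cP_g^+$ commutes with the action of $O(n)$ on $\overline B$. For any $R \in O(n)$, $\phi_+ \circ R$ is positive over $B$, vanishes on $\del B$, and satisfies $\cP_g^+(\phi_+ \circ R) = -\lambda^+ (\phi_+ \circ R)$; by the uniqueness clause of Lemma \ref{qs} it must be a positive scalar multiple of $\phi_+$, and evaluating at the origin (which is fixed by $R$) forces the scalar to equal $1$. For regularity, the inclusion $\phi_+ \in E(B) \subset \bigcap_{p<\infty} W^{2,p}_{\mathrm{loc}}(B)$ yields $\phi_+ \in C^{1,\alpha}_{\mathrm{loc}}(B)$ by Sobolev embedding, and the standard fact that a $W^{2,p}_{\mathrm{loc}}$ function with $p > n$ admits a classical second-order Taylor expansion almost everywhere gives the pointwise a.e.\ second differentiability.

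Write $\phi_+(x) = u(|x|)$. The rotational invariance of $g$ implies that $\dot g_x$ is $O(n-1)$-invariant on $\gamma'(r_0)^{\perp}$ and hence a scalar multiple of the induced inner product; call its common eigenvalue $\zeta(r)$. Lemma \ref{normal} then yields the radial ODE
\[
\label{radialeq}
m^+\bigl(u''(r)\bigr) + (n-1)\, m^+\bigl(\zeta(r)\, u'(r)\bigr) = -\lambda^+\, u(r)
\]
at every $r \in (0,R)$ where $u$ is twice differentiable, together with $u(R)=0$, $u>0$ on $[0,R)$, and $u'(0)=0$ (the latter because a $C^1$ radial function has vanishing radial derivative at the origin). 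The heart of the argument is to show that $\phi_+$ has no interior weak local minimum in $B$. If $x_0 \in B$ were such a point, touching $\phi_+$ from below by the paraboloid $\psi(x) = \phi_+(x_0) - c|x-x_0|^2$ and invoking the $L^n$-viscosity supersolution property gives $\cP_g^+(D^2 \psi(x_0)) \le -\lambda^+\phi_+(x_0)$; a direct computation shows $\cP_g^+(D^2 \psi(x_0)) = -2ca\,\tr G^{-1}(x_0)$, which tends to $0$ as $c \to 0^+$ while the right-hand side remains bounded away from $0$, a contradiction.

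Granted no interior weak local minimum of $\phi_+$, monotonicity of $u$ follows from a standard one-variable argument: if $u(r_1)<u(r_2)$ for some $r_1<r_2$ in $[0,R)$, then the infimum of $u$ on $[0,r_2]$ is attained in $[0,r_2)$, providing a weak local minimum of $\phi_+$ at an interior point of $B$ (possibly the origin). Hence $u'\le 0$ throughout $B$. For strict inequality almost everywhere, suppose $u'(r_0)=0$ at a point $r_0 \in (0,R)$ of twice differentiability; then \eqref{radialeq} forces $m^+(u''(r_0)) = -\lambda^+ u(r_0) < 0$, whence $u''(r_0)<0$, and a second-order Taylor expansion makes $r_0$ a strict local maximum of $u$, contradicting that $u$ is non-increasing. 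The proof for $\phi_-$ is entirely analogous, touching from above at a hypothetical interior weak local maximum and using that a critical point of the radial profile of $\phi_-$ would force a strict local minimum. I expect the main technical obstacle to be the viscosity touching step: one must verify that the paraboloid test function is admissible in the $L^n$-viscosity framework of \cite{CCKS} used in Lemma \ref{qs}.
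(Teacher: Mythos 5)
Your proposal is correct and follows essentially the same route as the paper: radiality from the uniqueness clause of Lemma~\ref{qs}, $C^1$ and a.e.\ second differentiability from $W^{2,p}_{loc}$ embedding, monotonicity by excluding interior (weak) local minima via the viscosity/paraboloid touching argument (which the paper asserts in one line and you rightly justify, noting that smooth test functions are admissible in the $L^n$-viscosity setting since $\phi_+$ is in fact a strong $W^{2,p}_{loc}$ solution with continuous coefficients), and then a pointwise use of the radial formula of Lemma~\ref{normal} at critical radii. The only divergence is in the last step: the paper takes a non-isolated point of the zero set of $\phi_+'$ to force $\phi_+''=0$ there and contradicts the equation ($\cP_g^+\phi_+=0\neq-\lambda^+\phi_+<0$), whereas you use the equation at a critical radius to force $u''<0$ and contradict the already-established monotonicity; both are valid (you just need, as you implicitly do, to work at the a.e.\ points where the equation holds pointwise), and your variant even yields that $u'$ is nonzero at every such point.
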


\begin{proof}
The eigenfunction $\phi_+$ is in $E(B)$ by Lemma~\ref{qs}.
In particular $\phi_+$ is in $W^{2,p}_{loc}(B)$ for all $p<\infty$.
Therefore $\phi_+$ is continuously differentiable over $B$ by Sobolev embedding.
Additionally $\phi_+$ is radial, because it is unique up to scalar multiplication.
This implies that $\phi_+$ is twice differentiable at almost every point in $B$.
Since $\phi_+$ is an $L^n$-viscosity solution of the eigenvalue equation \eqref{eveq}, it follows that $\phi_+$ has no interior local minima.
Therefore $\phi_+'(x) \le 0$ for every $x$ in $B$.
Define a set
\[
	Z_+ = \Big\{ x \in B : \phi_+'(x) = 0 \Big\}
\]
Suppose $Z_+$ has positive measure.
The isolated points of $Z_+$ are countable, hence have measure zero.
Therefore there is a point $x_0$ in $Z_+$ which satisfies the following four properties.
First, $x_0$ is a limit point of $Z_+$.
Second, $x_0$ is not the origin.
Third, $\phi_+$ is twice differentiable at $x_0$.
Fourth,
\[
\label{efrad1}
	\cP_g^+ \phi_+(x_0) = -\lambda^+ \phi_+(x_0)
\]
Note that $\phi_+'(x_0)=0$, because $x_0$ is in $Z_+$.
Since $\phi_+$ is twice differentiable at $x_0$, it follows that $\phi_+''(x_0)=0$, because $x_0$ is a limit point of $Z_+$.
Therefore $\cP_g^+ \phi_+(x_0)=0$ by Lemma~\ref{normal}.
Additionally $\lambda^+$ is positive by Lemma~\ref{qs}.
Hence $\phi_+(x_0)=0$, by \eqref{efrad1}.
However $\phi_+(x)>0$ for all $x$ in $B$, by Lemma~\ref{qs}.
This contradiction proves that $Z_+$ has measure zero.
That is $\phi_+'(x) \neq 0$ for almost every $x$ in $B$.
This completes the proof for $\phi_+$, and a similar argument applies to $\phi_-$.
\end{proof}

Now we can prove Theorem~\ref{cheng}.

\begin{proof}[Proof of Theorem 1.1]
We first prove \eqref{cheng1}.
Let $B_R$ be an open ball about the origin in $\R^n$ of radius $R$.
Use geodesic normal coordinates to identify $B_M(x_0,R)$ with $B_R$, and let $g$ be the induced normal metric on $\overline{B_R}$.
Similarly, use geodesic normal coordinates to identify $B_K(R)$ with $B_R$, and let $h$ be the induced normal metric on $\overline{B_R}$.
Let $\phi_+$ be the positive eigenfunction of $\cP_h^+$ given by Lemma~\ref{qs}.
By Lemma~\ref{efrad}, the eigenfunction $\phi_+$ is radial, continuously differentiable over $B_R$, and twice differentiable at almost every point in $B_R$.
Moreover $\phi_+'(x) \le 0$ for every $x$ in $B_R$.
By Lemma~\ref{normal}, for almost every $x$ in $B_R$,
\[
\label{chengmg+}
	\cP_g^+ \phi_+(x) = m^+ \Big( \phi_+''(x) \Big) + m^+ \Big( \phi_+'(x) \dot g_x \Big)
\]
and
\[
\label{chengmh+}
	\cP_h^+ \phi_+(x) = m^+ \Big( \phi_+''(x) \Big) + m^+ \Big( \phi_+'(x) \dot h_x \Big)
\]
Note that $m^+(-\dot g_x) \le m^+(-\dot h_x)$ for every $x \neq 0$ in $B_R$ by Lemma~\ref{rauch}.
Hence for every $x \neq 0$ in $B_R$,
\[
	m^+ \Big( \phi_+'(x) \dot g_x \Big) \le m^+ \Big( \phi_+'(x) \dot h_x \Big)
\]
By \eqref{chengmg+} and \eqref{chengmh+}, this shows that for almost every $x$ in $B_R$,
\[
	\cP_g^+ \phi_+(x) \le \cP_h^+ \phi_+(x)
\]
By Lemma~\ref{barta},
\[
\label{lglh}
	\lambda^+(g) \ge \einf_B \frac{-\cP_g^+ \phi_+}{\phi_+} \ge \einf_B \frac{-\cP_h^+ \phi_+}{\phi_+} = \lambda^+(h)
\]
This completes the proof of \eqref{cheng1}.

Next we prove that if $\lambda^+(g) = \lambda^+(h)$, then $B_M(x_0,R)$ is isometric to $B_K(R)$.
By \eqref{lglh}, the assumption that $\lambda^+(g) = \lambda^+(h)$ implies that
\[
	\lambda^+(g) = \einf_B \frac{-\cP_g^+ \phi_+}{\phi_+}
\]
Therefore $\phi_+$ is also an eigenfunction of $\cP_g^+$, by Lemma~\ref{barta}.
In particular $\cP_g^+ \phi_+(x) = \cP_h^+ \phi_+(x)$ for almost every $x$ in $B$.
Moreover $\phi_+'(x) \neq 0$ for almost every $x$ in $B_R$, by Lemma~\ref{efrad}.
By \eqref{chengmg+} and \eqref{chengmh+}, this shows that $m^+(-\dot g_x) = m^+(-\dot h_x)$ for every $x \neq 0$ in $B$.
Therefore $g$ is equal to $h$ over $\overline{B_R}$ by Lemma~\ref{rauch}.
Hence $B_M(x_0,R)$ is isometric to $B_K(R)$.

Repeating the argument establishes the other statements as well.
For \eqref{cheng5} and \eqref{cheng6}, we use Lemma~\ref{bg} instead of Lemma~\ref{rauch}.
\end{proof}

In the proof of Theorem~\ref{af}, we use the following variant of Lemma~\ref{normal}.

\begin{Lemma}
\label{puccirad}
Let $B$ be an open ball about the origin in $\R^n$ of finite radius.
Let $g$ be a normal metric on $\overline B$ which is $O(n)$-invariant.
Let $\rho: B \to \R$ be the continuous function such that for each $x \neq 0$ in $B$, the sphere about the origin containing $x$ is isometric to a round sphere of radius $\rho(x)$.
Let $f$ be a radial function in $E(B)$.
Then for almost every $x$ in $B$,
\[
\label{puccirad1}
	\cP_g^+ f(x) = m^+ \Big( f''(x) \Big) + (n-1) m^+\bigg( \frac{f'(x) \rho'(x)}{\rho(x)} \bigg)
\]
\end{Lemma}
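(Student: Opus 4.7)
The plan is to reduce the identity to Lemma~\ref{normal} and then compute the eigenvalues of $\dot g_x$ explicitly using the $O(n)$-invariance. By Lemma~\ref{normal}, for almost every $x \neq 0$ in $B$,
\[
	\cP_g^+ f(x) = m^+\bigl( f''(x) \bigr) + m^+\bigl( f'(x) \dot g_x \bigr),
\]
so it suffices to show that all $n-1$ eigenvalues of $\dot g_x$ are equal to $\rho'(x)/\rho(x)$, where $\rho$ is viewed as a radial function and $\rho'$ denotes its radial derivative.

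First, I would observe that since $g$ is a normal metric, Gauss's lemma implies that in geodesic polar coordinates $g$ decomposes as $dr^2+g_r$, where $g_r$ is a metric on the geodesic sphere of radius $r$. The $O(n)$-invariance of $g$ together with the definition of $\rho$ then forces $g_r = \rho(r)^2 g_{S^{n-1}}$, where $g_{S^{n-1}}$ is the standard round metric of radius one. Thus $g$ has the warped-product form $dr^2 + \rho(r)^2 g_{S^{n-1}}$.

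Next, fix $x \neq 0$ in $B$ with $r_0=|x|$ and let $\gamma$ be the unit speed radial geodesic with $\gamma(r_0)=x$. For any unit vector $w \in V$ (the subspace of $T_x B$ orthogonal to $\gamma'(r_0)$), I would construct the Jacobi field $W$ with $W(0)=0$ and $W(r_0)=w$ as the variation field of the rotation of $\gamma$ in the angular direction corresponding to $w$. Because of the warped product structure, this Jacobi field satisfies $|W(t)|_g = \rho(t)|v|_{S^{n-1}}$ for a suitable angular vector $v$, and the normalization $|w|_g=1$ forces $|v|_{S^{n-1}}=1/\rho(r_0)$. Differentiating,
\[
	\dot g_x(w,w) = \tfrac{1}{2}\tfrac{d}{dt}|W(t)|_g^2\Big|_{t=r_0} = \frac{\rho'(r_0)}{\rho(r_0)}.
\]
Since this holds for every unit $w \in V$, the symmetric form $\dot g_x$ is $\rho'(x)/\rho(x)$ times the restriction of $g$ to $V$, so all $n-1$ of its eigenvalues equal $\rho'(x)/\rho(x)$. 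Substituting into the formula from Lemma~\ref{normal} gives
\[
	m^+\bigl( f'(x) \dot g_x \bigr) = (n-1)\, m^+\!\left( \frac{f'(x) \rho'(x)}{\rho(x)} \right),
\]
which is the claimed identity.

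The only real subtlety is verifying that $|W(t)|_g=\rho(t)|v|_{S^{n-1}}$ follows from the warped-product form and that $W$ is indeed the Jacobi field with the stated boundary conditions; this is a routine variation-of-geodesics computation, so no substantive obstacle is expected. The formula holds at almost every $x$ because $f$, being radial and in $E(B)$, is twice differentiable almost everywhere, and the derivation is valid at every such $x \neq 0$.
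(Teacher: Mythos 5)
Your proposal is correct, but it takes a different route from the paper. The paper's proof is a direct one-step computation: it expresses the Hessian of the radial function $f$ in geodesic polar coordinates and simply reads off that its eigenvalues are $f''(x)$ together with $n-1$ copies of $f'(x)\rho'(x)/\rho(x)$, with no reference to Lemma~\ref{normal} or to Jacobi fields. You instead derive the lemma as a corollary of Lemma~\ref{normal}, identifying $\dot g_x$ explicitly: using Gauss's lemma and $O(n)$-invariance to write $g = dr^2 + \rho(r)^2 g_{S^{n-1}}$, you realize the Jacobi fields vanishing at the origin as restrictions of the rotational Killing fields, compute $|W(t)|_g = \rho(t)|v|_{S^{n-1}}$, and conclude that $\dot g_x = \tfrac{\rho'(x)}{\rho(x)}\,\langle \cdot,\cdot\rangle_g$ on the orthogonal complement of the radial direction, which by polarization from the quadratic form is legitimate. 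The computation is sound (the claimed form of $W$ is indeed the unique Jacobi field with the stated boundary data, uniqueness being guaranteed because a normal metric has no conjugate points inside the ball, a fact already implicit in the paper's definition of $\dot g_x$). What your route buys is a clean structural statement --- that for an $O(n)$-invariant normal metric the form $\dot g_x$ (essentially the shape operator of the geodesic sphere) is the scalar $\rho'/\rho$ times the metric --- and full consistency with the framework of Lemma~\ref{normal}; what the paper's route buys is brevity, since the polar-coordinate Hessian computation it uses is the same one already invoked in the proof of Lemma~\ref{normal} and requires no Jacobi-field machinery.
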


\begin{proof}
Let $x \neq 0$ be a point in $B$ such that $f$ is twice differentiable at $x$.
Expressing the Hessian in geodesic polar coordinates shows that the eigenvalues $\mu_1, \mu_2, \ldots, \mu_n$ of $\Hess_g f(x)$ are
\[
	\mu_1 = f''(x)
\]
and
\[
	\mu_2 = \ldots = \mu_n = \frac{\rho'(x)}{\rho(x)} f'(x)
\]
Therefore \eqref{puccirad1} holds.
\end{proof}

We conclude the article by proving Theorem~\ref{af}.

\begin{proof}[Proof of Theorem 1.2]
First we prove \eqref{af1}.
Note that there is exactly one point $p$ in $\Sigma$ which is on the axis of symmetry in $\R^{n+1}$.
Let $L$ be the intrinsic distance in $\Sigma$ from $p$ to the boundary $\del \Sigma$.
Let $B_L$ be an open ball about the origin in $\R^n$ of radius $L$.
Use geodesic normal coordinates at $p$ to identify $\Sigma$ with $B_L$ and let $g$ be the induced normal metric on $\overline{B_L}$.
Let $\rho: \overline{B_L} \to \R$ be the continuous function such that for each $x \neq 0$ in $B_L$, the sphere about the origin containing $x$ is isometric to a round sphere of radius $\rho(x)$.
Note that $\rho$ is radially symmetric, smooth away from the origin, and satisfies $|\rho'(x)| \le 1$ for all $x$ in $\overline{B_L}$.
Let $B_R$ be an open ball about the origin in $\R^n$ of radius $R$.
Let $h$ be the Euclidean metric on $B_R$.
Let $\phi_+: \overline{B_R} \to \R$ be the positive eigenfunction of $\cP_h^+$ in $E(B_R)$ given by Lemma~\ref{qs}.
By Lemma~\ref{efrad}, the eigenfunction $\phi_+$ is radial, continuously differentiable over $B_R$, and twice differentiable at almost every point in $B_R$.
Note that $L \ge R$, and let $d=L-R$.
Define a radial function $\psi_+: \overline{B_L} \to \R$ by
\[
	\psi_+(x) =
	\begin{cases}
		\phi_+(|x|-d) & |x| \ge d \\
		\phi_+(0) & |x| \le d \\
	\end{cases}
\]
H\"older's inequality shows that $\psi_+$ is in $E(B_L)$.
Moreover $\psi_+$ is radial, continuously differentiable over $B_L$, and twice differentiable at almost every point in $B_L$.
Furthermore $\psi_+'(x) \le 0$ for every $x$ in $B_L$.
By Lemma~\ref{puccirad}, for almost every $x$ in $B_L$,
\[
\label{afmg+}
	\cP_g^+ \psi_+(x) = m^+ \Big( \psi_+''(x) \Big) - (n-1) \psi_+'(x) m^+ \bigg( -\frac{\rho'(x)}{\rho(x)} \bigg )
\]
Also by Lemma~\ref{puccirad}, for almost every $x$ in $B_R$,
\[
\label{afmh+}
	\cP_h^+ \phi_+(x) = m^+ \Big( \phi_+''(x) \Big) - (n-1) \phi_+'(x) m^+ \bigg( -\frac{1}{|x|} \bigg)
\]
Note that $|\rho'(x)| \le 1$ for all $x$ in $B_L$ and $\rho(L)=R$.
In particular $\rho(x) \ge |x|-d$ for all $x$ in $B_L$.
Therefore, if $x$ is a point in $B_L$ such that $|x| > d$, then
\[
	-\frac{\rho'(x)}{\rho(x)} \ge -\frac{1}{|x|-d}
\]
By the monotonicity of $m^+$, if $x$ is a point in $B_L$ such that $|x| > d$, then
\[
	m^+ \bigg( -\frac{\rho'(x)}{\rho(x)} \bigg) \ge m^+ \bigg( -\frac{1}{|x|-d} \bigg)
\]
By \eqref{afmg+} and \eqref{afmh+}, this shows that if $x$ is a point in $B_L$ such that $|x| > d$, then
\[
	\cP_g^+ \psi_+(x) \ge \cP_h^+ \phi_+(|x|-d) 
\]
If $x$ is a point in $B_L$ such that $|x| < d$, then $\psi_+$ is constant on a neighborhood of $x$, so $\cP_g^+ \psi_+(x) = 0$.
Therefore, by Lemma~\ref{barta},
\[
\label{aflhlg}
	\lambda^+(\Sigma) \le \esup_{B_L} \frac{-\cP_g^+ \psi_+(x)}{\psi_+(x)} \le \esup_{B_R} \frac{-\cP_h^+ \phi_+(x)}{\phi_+(x)} = \lambda^+(B_0(R))
\]
This proves \eqref{af1}.

Next we prove that if $\lambda^+(\Sigma)=\lambda^+(B_0(R))$, then $\Sigma$ is isometric to $B_0(R)$.
By \eqref{aflhlg}, the assumption that $\lambda^+(\Sigma)=\lambda^+(B_0(R))$ implies that
\[
	\lambda^+(\Sigma) = \esup_{B_L} \frac{-\cP_g^+ \psi_+(x)}{\psi_+(x)}
\]
Therefore $\psi_+$ is an eigenfunction of $\cP_g^+$ by Lemma~\ref{barta}.
Then $\psi_+'(x) \neq 0$ for almost every $x$ in $B_L$, by Lemma~\ref{efrad}.
However $\psi_+'(x)=0$ for every $x$ in $B_L$ satisfying $|x|<d$.
Therefore $d=0$, i.e. $L=R$.
This implies that $\Sigma$ is isometric to $B_0(R)$.
A similar argument yields \eqref{af2}.
\end{proof}

\end{document}